\DeclareMathOperator{\dist}{dist}
\DeclareMathOperator{\diam}{diam}
\newcommand{\ud}[0]{\,\mathrm{d}}
\newcommand{\Babs}[1]{\Big|#1\Big|}
\newcommand{\R}{\mathbb{R}}
\newcommand{\Z}{\mathbb{Z}}
\newcommand{\N}{\mathbb{N}}
\numberwithin{equation}{section}
  \let\c@equation\c@subsection
\theoremstyle{plain}
\newtheorem{theorem}[subsection]{Theorem}
\newtheorem{lemma}[subsection]{Lemma}
\newtheorem{proposition}[subsection]{Proposition}
\theoremstyle{definition}
\newtheorem{definition}[subsection]{Definition}
\newtheorem{examples}[subsection]{Examples}
\theoremstyle{remark}
\newtheorem{remark}[subsection]{Remark}
\title{What is a cube?}
\author{Tuomas Hyt\"onen}
\author{Anna Kairema}
\address{Department of Mathematics and Statistics, P.O.B. 68 (Gustaf H\"allstr\"omin katu 2), FI-00014 University of Helsinki, Finland}
\email{tuomas.hytonen@helsinki.fi, anna.kairema@helsinki.fi}
\subjclass[2010]{30L99 (51M05)}
\keywords{Dyadic cube, metric space, plumpness}
\thanks{T.H. is supported by the European Research Council's Starting Grant ``Analytic-probabilistic methods for borderline singular integrals''.
Both authors are supported by the Academy of Finland, grant 133264.}
\begin{document}
%Sent 13.9.2012 to Annales Academiae Scientiarum Fennicae
%19.11.2012
%FINAL version updated to ArXiV

\begin{abstract}
We give an intrinsic characterization of all subsets of a doubling metric space that can arise as a member of some system of dyadic cubes on the underlying space, as constructed by M.~Christ.
\end{abstract}

\maketitle

\section{Introduction}

The notion of a cube in the usual Euclidean space does not need much explanation. \emph{Dyadic cubes} are then certain special cubes with particular coordinate representations. The indispensable role of the dyadic cubes in Harmonic Analysis on Euclidean spaces has also motivated the construction of analogous structures in more general settings, most notably by M.~Christ \cite{Christ90} in doubling metric spaces. However, this leads to a slight change in the point-of-view: there is no longer a notion of a `cube' as such, and even a `dyadic cube' barely makes sense as an individual object; it only becomes meaningful as a member of a \emph{system of dyadic cubes} with useful intersection and covering properties reminiscent of those in the Euclidean case. Nevertheless, 
%a `dyadic cube' is no longer a special case of a `cube' (a notion which is usually left undefined in this generality), 
%
%Dyadic cubes play an indispensable role in Harmonic Analysis on Euclidean spaces. Likewise, the construction of analogous dyadic structures in more general spaces has been instrumental for many developments in Analysis on metric spaces (VIITTEIT\"A!). These dyadic structures are not `canonical'; on the contrary, a given space admits several different systems of dyadic cubes, which is actually important for some of the recent applications. Given this flexibility and the many useful properties of dyadic cubes, 
it is natural to ask the following question, which was posed to one of us by F.~Bernicot:
\begin{quote}
  What assumptions on a set do I have to put such that it can be considered one of
the dyadic sets of a suitable dyadic system? \cite{Bernicot:personal}
\end{quote}

In this note, we give a complete answer to this question, provided that a `suitable dyadic system' is understood in the sense of the construction by Christ, which seems to be the most useful one at least for problems of singular integrals, and which we recall below. But let us first discuss the motivation to understand Bernicot's question.

First, many common arguments in Euclidean Harmonic Analysis involve the dyadic subcubes of a given (a priori, non-dyadic) cube. While it is quite clear what this means in the Euclidean space, the notion of a `dyadic subcube' seems to become meaningless in an abstract space, unless we started from a dyadic cube from the beginning. Our characterization, however, provides an explicit way of testing whether a set qualifies as a dyadic cube. For such a set $E$, the existing techniques may be further pushed to yield a dyadic system $\mathscr{D}$ with $E\in\mathscr{D}$. After this, the dyadic subcubes of $E$ come as a part of the construction.

Another situation is the following: After the seminal work of Nazarov--Treil--Volberg \cite{NTV}, it is now standard to treat singular integrals with respect to a non-doubling measure on $\R^n$ with the help of a random choice of the system of dyadic cubes. Since any cube of $\R^n$ can arise as a random dyadic cube in their construction, it is necessary to impose certain assumptions, such as the `accretivity'
\begin{equation*}
  \Babs{\frac{1}{\mu(Q)}\int_Q b\ud\mu}\geq\delta>0
\end{equation*}
on the testing function $b$ in the $Tb$ theorem, over the family of all cubes $Q\subset\R^n$. The $Tb$ theorem of Nazarov--Treil--Volberg was generalized to the setting of an abstract metric space $X$ by Hyt\"onen and Martikainen \cite{HM09}, but there it was left unclear, for which sets $Q\subset X$ exactly it is necessary to impose the above accretivity condition. The present characterization of all sets that can arise as dyadic cubes gives a clean form of this condition in the mentioned theorem.

%The dyadic ``cubes'' $Q$ in a quasi-metric space constructed by M. Christ \cite{Christ90} (cf. \cite{oma}) have the property that $B\subseteq Q\subseteq CB$ for some ball $B$ and some constant $C\geq 1$ independent of $Q$. F. Bernicot asked whether any set $Q$ with this property can arise as a dyadic cube from a suitable choice of the construction. This is not the case; however, the following property gives a necessary and sufficient condition for a set to arise as a dyadic cube:

The set-up for our characterization is the following. Let $(X,d)$ be a metric space. We assume that $X$ has the following \emph{(geometric) doubling property}: There exists a positive integer $A_1\in \N$ such that for every $x\in X$ and $r>0$, the ball $B(x,r):=\{y\in X:d(y,x)<r\}$ can be covered by at most $A_1$ balls $B(x_i,r/2)$.

To state our characterization, we formulate the following notion, which goes back to Martio--V\"ais\"al\"a \cite{MartioVaisala1} (a similar condition was used in \cite{MartioVuorinen}):

\begin{definition}
A set $E\subseteq X$ is \textit{plump} with parameters $R>0$ and $b\in (0,1)$ if:
\begin{equation}\label{eq:property}
\text{For all $y\in E$ and $0<r\leq R$, there exists $z\in X$ such that $B(z,br)\subseteq B(y,r)\cap E$.}
%\text{If $y\in F$ and $k\geq m$, then $\exists z\in X\colon B(z,b_0\delta^k)\subseteq B(y,B_0\delta^k)\cap F$}
\end{equation}
\end{definition}

It turns out that a set $E$ can arise as a dyadic cube in $X$ if and only if both $E$ and $X\setminus E$ are plump, more precisely:

\begin{theorem}\label{thm:main}
Let $(X,d)$ be a geometrically doubling metric space.
Given $E\subseteq X$, the Christ-type dyadic cubes may be constructed in such a way that $\tilde{Q}\subseteq E\subseteq \bar{Q}$, where $\tilde{Q}$ and $\bar{Q}$ are the interior and closure of some dyadic cube $Q$, if and only if $E$ is bounded and both $E$ and $X\setminus E$ are plump with parameters $b\in (0,1)$ (depending only on the space) and $R\gtrsim \diam E$.
\end{theorem}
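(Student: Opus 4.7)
\emph{Necessity.} Suppose $\tilde Q \subseteq E \subseteq \bar Q$ with $Q = Q^{k_0}_{\alpha_0}$ a Christ cube of generation $k_0$, center $x_Q$, satisfying $B(x_Q, c_1 \delta^{k_0}) \subseteq \tilde Q$ and $\bar Q \subseteq B(x_Q, c_2 \delta^{k_0})$ for the standard constants $c_1, c_2$ of the construction. Then $\diam E \leq 2 c_2 \delta^{k_0}$, giving boundedness, so $R \asymp \diam E \asymp \delta^{k_0}$. For plumpness of $E$: given $y \in E$ and $r \leq R$, pick the largest $k \geq k_0$ with $(c_1 + c_2) \delta^k \leq r$, and let $P$ be a subcube of $Q$ at generation $k$ with $y \in \bar P$. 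Then $z := x_P$ satisfies $d(z, y) \leq c_2 \delta^k$, and $B(z, c_1 \delta^k) \subseteq \tilde P \subseteq \tilde Q \subseteq E$ sits inside $B(y, r)$ with $c_1 \delta^k \gtrsim r$, yielding plumpness with $b$ depending only on $c_1, c_2, \delta$. Plumpness of $X \setminus E$ follows in parallel, using a cube $P$ at generation $k$ with $y \in \bar P$ and $\tilde P \cap \tilde Q = \emptyset$; such a $P$ exists since $y \notin \tilde Q$, and then $B(z, c_1 \delta^k) \subseteq \tilde P \subseteq X \setminus \tilde Q \supseteq X \setminus E$.

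\emph{Sufficiency.} Fix $\delta \in (0,1)$ small (to be chosen in terms of $b$ and the doubling constant $A_1$), and pick $k_0 \in \Z$ so that $\diam E \leq \delta^{k_0} \leq bR/c_1$; this range is nonempty because $R \gtrsim \diam E$ with constant large enough, which is exactly the hypothesis. Apply plumpness of $E$ at scale $R$ to any $y_0 \in E$ to obtain a ``deep'' point $x_0 \in E$ with $B(x_0, bR) \subseteq E$; this will be the center of the cube identified with $E$. Build maximal $\delta^k$-separated center sets $\mathcal{X}^k$ with $\mathcal{X}^{k+1} \supseteq \mathcal{X}^k$: at generation $k_0$, take $\mathcal{X}^{k_0} = \{x_0\} \cup \mathcal{Y}^{k_0}$, where $\mathcal{Y}^{k_0}$ is a maximal $\delta^{k_0}$-separated subset of $X \setminus E$ (the separation from $x_0$ is automatic since $d(x_0, X \setminus E) \geq bR \geq \delta^{k_0}$, and the combined set is $\delta^{k_0}$-covering because $E \subseteq B(x_0, \delta^{k_0})$). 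For finer $k > k_0$, extend inductively by greedily adding centers separately inside $E$ and inside $X \setminus E$, using plumpness of each side at scale $\asymp c_2 \delta^k / b$ to ensure each new center has its outer ball on the correct side of $\partial E$. Coarser generations are extended in the usual way. Apply the Hyt\"onen--Kairema refinement of Christ's construction with the modification that, when assigning a parent to each newly introduced center, we pick the nearest ancestor \emph{on the same side of $\partial E$}; this keeps the ancestor tree consistent with the $E$ vs.\ $X \setminus E$ dichotomy and identifies the cube of generation $k_0$ centered at $x_0$ with $E$ modulo boundary.

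\emph{Main obstacle.} The hard part is the quantitative matching of constants. The inner ball $B(x_0, c_1 \delta^{k_0}) \subseteq \tilde Q^{k_0}_{\alpha_0}$ must fit inside $E$, forcing $c_1 \delta^{k_0} \leq bR$, while the outer ball must cover $E$, forcing $\diam E \leq 2 c_2 \delta^{k_0}$. Their compatibility pins down the universal plumpness parameter $b$ in terms of $c_1, c_2$ and the constant in $R \gtrsim \diam E$. The second, more structural difficulty lies in ensuring that no cube at any generation $k > k_0$ straddles $\partial E$; this is precisely where plumpness of \emph{both} $E$ and $X \setminus E$ is essential, as it supplies, at every scale $\delta^k$ and every boundary point $y$, a ball of radius $b \delta^k$ on each side of $\partial E$ in which a center can be placed. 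Without this two-sided richness, either centers on one side would run out of room and parents would have to cross $\partial E$, destroying the alignment of the cube boundaries with $\partial E$. The final verification--that the modified parent pointers, together with the Christ inductive definition of cubes, produce a cube at $x_0$ equal to $E$ modulo boundary--reduces to the separation of $x_0$ from $\mathcal{Y}^{k_0}$ by $\geq \delta^{k_0}$ and the respect-of-sides property of the parent tree from generation $k_0$ downward.
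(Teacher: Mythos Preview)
Your approach is essentially the paper's: for necessity you descend to a subcube and use its inner ball, and for sufficiency you build dyadic centre sets separately in $E$ and $X\setminus E$ (using plumpness to keep them away from $\partial E$), assign parents on the same side, and invoke the Hyt\"onen--Kairema/Christ construction. The paper organizes the sufficiency into three lemmas (choice of points, choice of partial order, verification of $\tilde Q\subseteq E\subseteq\bar Q$) but the content matches your sketch.

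There is one genuine slip in your necessity argument for $X\setminus E$. You write
\[
  B(z,c_1\delta^k)\subseteq\tilde P\subseteq X\setminus\tilde Q\supseteq X\setminus E,
\]
but from $\tilde Q\subseteq E$ one only gets $X\setminus E\subseteq X\setminus\tilde Q$, so the chain does not place $B(z,c_1\delta^k)$ inside $X\setminus E$. The fix is to use the other inclusion $E\subseteq\bar Q$: since $\tilde P$ is open and $\tilde P\cap Q=\emptyset$, any point of $\tilde P\cap\bar Q$ would force $\tilde P$ to meet $Q$, a contradiction; hence $\tilde P\subseteq X\setminus\bar Q\subseteq X\setminus E$, which is what you need. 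With this correction your necessity argument is fine (and in fact handles general $E$ with $\tilde Q\subseteq E\subseteq\bar Q$ directly, whereas the paper's Proposition proves plumpness for $Q$ itself).

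A minor point on the sufficiency side: you ask that each new centre have ``its outer ball on the correct side of $\partial E$'', but this is stronger than needed and would obstruct the required $B_0\delta^k$-density. The paper only requires $\dist(x^k_\alpha,X\setminus F)\geq b_0\delta^k$ (the \emph{inner} radius), which is exactly what plumpness delivers and is enough to force any nearby parent (within $\tfrac12 b_0\delta^k$) to lie on the same side.
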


We will provide a more precise quantitative formulation of the result in Proposition~\ref{prop:cube_is_plum} and Proposition~\ref{prop:existsDyadSystem}. The properties of dyadic cubes will be recalled in Section \ref{sec:dyadic_cubes}.

\section{Definitions and lemmas}

We begin this section by recalling the dyadic structures. After that, we recall and study the notion of plumpness defined in the Introduction.

\subsection{Dyadic cubes}\label{sec:dyadic_cubes}
In a geometrically doubling metric space $(X,d)$, a family of Borel sets $Q^k_\alpha, k\in \Z, \alpha\in I(k)$, is called \textit{a system of dyadic cubes with parameters} $\delta\in (0,1)$ and $0<c_1\leq C_1<\infty$ if it has the following properties:
\begin{equation}\label{eq:cover}
  X=\bigcup_{\alpha\in I(k)}Q^k_{\alpha}\quad\text{(disjoint union)}\quad\forall k\in\Z;
\end{equation}
\begin{equation}\label{eq:nested}
   \text{if }\ell\geq k\text{, then either }Q^{\ell}_{\beta}\subseteq Q^k_{\alpha}\text{ or }Q^k_{\alpha}\cap Q^{\ell}_{\beta}=\emptyset;
\end{equation}
\begin{equation}\label{eq:contain}
  B(x^k_{\alpha},c_1\delta^k)\subseteq Q^k_{\alpha}\subseteq B(x^k_{\alpha},C_1\delta^k)=:B(Q^k_{\alpha});
\end{equation}
\begin{equation}\label{eq:monotone}
   \text{if }\ell\geq k\text{ and }Q^{\ell}_{\beta}\subseteq Q^k_{\alpha}\text{, then }B(Q^{\ell}_{\beta})\subseteq B(Q^k_{\alpha}).
\end{equation}
The set $Q^k_\alpha$ is called a \textit{dyadic cube of generation} $k$ centered at $x^k_\alpha$ with side length $\delta^k$. The interior and closure of $Q^k_\alpha$ are denoted by $\tilde{Q}^k_{\alpha}$ and $\bar{Q}^k_{\alpha}$, respectively. It follows from the geometric doubling property that the index set $I(k)$ is at most countable for each value of $k\in \Z$, and it can be assumed to be an initial interval in $\N$.

\begin{definition}
We say that a set $\{x^k_\alpha\}_{k,\alpha}\subseteq X$ is a \emph{system of dyadic points} with parameters $\delta\in(0,1)$ and $0<c_0\leq C_0<\infty$ if the following properties hold for every $k\in\Z$:
\begin{equation}\label{ehdot}
  d(x_{\alpha}^k,x_{\beta}^k)\geq c_0\delta^k\quad(\alpha\neq\beta),\qquad
  \min_{\alpha}d(x,x^k_{\alpha})< C_0\delta^k\quad\forall\,x\in X.
\end{equation}

We say that  a partial order $\leq$ among the index pairs $(k,\alpha)$ is a \emph{dyadic partial order} for a given system of dyadic points, if it satisfies the following properties:
\begin{itemize}
  \item Every $(k+1,\beta)$ satisfies $(k+1,\beta)\leq(k,\alpha)$ for exactly one value of $\alpha$.
  \item For $\ell\leq k$, we have $(\ell,\beta)\leq(k,\alpha)$ if and only if $\ell=k$ and $\beta=\alpha$.
  \item For $\ell>k$, we have $(\ell,\beta)\leq(k,\alpha)$ if and only if there exist $\eta_k=\alpha,\eta_{k+1},\ldots,\eta_{\ell-1},\eta_{\ell}=\beta$ such that $(j+1,\eta_{j+1})\leq(j,\eta_j)$ for every $j\in\{k,k+1,\ldots,\ell-1\}$.
  \item The relation $(k+1,\beta)\leq(k,\alpha)$ is almost determined by the proximity of the points in the sense of the two implications
  \begin{equation*}
  d(x^{k+1}_\beta,x^k_\alpha)<\frac12 c_0\delta^k\ \Rightarrow\ 
  (k+1,\beta)\leq(k,\alpha)\ \Rightarrow\  d(x^{k+1}_\beta,x^k_\alpha)<C_0\delta^k.
\end{equation*}
\end{itemize}

\end{definition}

We recall from \cite{oma} the following result, which is a slight elaboration of seminal work by M. Christ \cite{Christ90}:

\begin{theorem}\label{thm:existence}
Let $(X,d)$ be a geometrically doubling metric space. Suppose that there is a system of dyadic points $\{x^k_\alpha\}_{k,\alpha}$ with parameters $\delta\in(0,1)$ and $0<c_0\leq C_0<\infty$ that satisfy $12C_0\delta\leq c_0$,
 and a dyadic partial order $\leq$ among the index pairs $(k,\alpha)$. Then there exists a system of dyadic cubes $Q^k_\alpha$  with parameters $\delta$, $c_1=\tfrac13 c_0$, $C_1=2C_0$, and centre points $x^k_\alpha$. In fact, this system can be constructed in such a way that
\begin{equation}\label{eq:3cubes}
  \tilde{Q}^k_\alpha\subseteq Q^k_\alpha\subseteq \bar{Q}^k_\alpha,
\end{equation}
where
\begin{equation}\label{eq:closedCube}
  \bar{Q}^k_\alpha=\overline{\{x^{\ell}_\beta:(\ell,\beta)\leq(k,\alpha)\}}
\end{equation}
and
\begin{equation}\label{eq:openCube}
  \tilde{Q}^k_\alpha=\operatorname{int}\bar{Q}^k_\alpha=
  \Big(\bigcup_{\gamma\neq\alpha}\bar{Q}^k_\gamma\Big)^c.
\end{equation}
\end{theorem}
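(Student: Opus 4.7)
The plan is to take \eqref{eq:closedCube} and \eqref{eq:openCube} as the \emph{definitions} of $\bar Q^k_\alpha$ and $\tilde Q^k_\alpha$, derive the ball containments of \eqref{eq:contain} directly from the two distance-implications of the dyadic partial order, and finally recover the Borel cube $Q^k_\alpha$ by consistently assigning the boundary points among the closed cubes.

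For the outer containment $\bar Q^k_\alpha\subseteq B(x^k_\alpha,C_1\delta^k)$, I would telescope along the chain axiom: every $(\ell,\beta)\leq(k,\alpha)$ with $\ell>k$ yields a chain $\alpha=\eta_k,\dots,\eta_\ell=\beta$ with $d(x^{j+1}_{\eta_{j+1}},x^j_{\eta_j})<C_0\delta^j$ by the second proximity implication, so $d(x^\ell_\beta,x^k_\alpha)<C_0\delta^k/(1-\delta)<2C_0\delta^k$, using $12C_0\delta\leq c_0\leq C_0$ (which forces $\delta\leq 1/12$); taking closures gives $C_1=2C_0$. For the inner containment I would show that $B(x^k_\alpha,\tfrac13 c_0\delta^k)$ is disjoint from every $\bar Q^k_\gamma$ with $\gamma\neq\alpha$. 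Fix such $\gamma$ and $(\ell,\beta)\leq(k,\gamma)$: when $\ell=k$ disjointness follows immediately from the first property of \eqref{ehdot}; when $\ell>k$ the chain begins at some child $(k+1,\eta_{k+1})\leq(k,\gamma)$, and unique parenthood forces $(k+1,\eta_{k+1})\not\leq(k,\alpha)$, so the contrapositive of the first proximity implication gives $d(x^{k+1}_{\eta_{k+1}},x^k_\alpha)\geq\tfrac12 c_0\delta^k$, while the tail of the chain contributes at most $C_0\delta^{k+1}/(1-\delta)\leq c_0\delta^k/11$. A triangle-inequality combination leaves $d(x^\ell_\beta,x^k_\alpha)\geq\tfrac{9}{22}c_0\delta^k$, a uniform separation that prevents any $y$ with $d(y,x^k_\alpha)<\tfrac13 c_0\delta^k$ from being a limit of the generating set of $\bar Q^k_\gamma$; hence $c_1=\tfrac13 c_0$ works.

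To close the definitional picture I would verify that the closed cubes cover $X$ at every level: given $x\in X$, the second condition of \eqref{ehdot} supplies a sequence $x^{\ell_j}_{\beta_j}\to x$ with $\ell_j\to\infty$, whose unique level-$k$ ancestors have centres inside the ball $B(x,3C_0\delta^k)$, which contains only finitely many index-$k$ centres by the separation in \eqref{ehdot} together with geometric doubling. Pigeonhole then produces some $\alpha$ with $x\in\bar Q^k_\alpha$. Combined with the interior disjointness already established, this yields the identity $\tilde Q^k_\alpha=\operatorname{int}\bar Q^k_\alpha=(\bigcup_{\gamma\neq\alpha}\bar Q^k_\gamma)^c$ asserted in \eqref{eq:openCube}; the nesting $\bar Q^\ell_\beta\subseteq\bar Q^k_\alpha$ whenever $(\ell,\beta)\leq(k,\alpha)$ is automatic from \eqref{eq:closedCube} and transitivity of $\leq$, and the monotonicity property \eqref{eq:monotone} then follows from the outer ball bound.

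The main obstacle is the final assembly of the Borel set $Q^k_\alpha$ satisfying \eqref{eq:3cubes} so that \eqref{eq:cover} and \eqref{eq:nested} hold at every generation \emph{simultaneously}. Since the interiors $\tilde Q^k_\alpha$ are pairwise disjoint and the closures $\bar Q^k_\alpha$ cover $X$, only the boundary set $X\setminus\bigcup_\alpha\tilde Q^k_\alpha$ requires distribution, and each of its points lies in only finitely many closed cubes. I would fix once and for all a well-ordering of the countable index set $\bigsqcup_k I(k)$ compatible with the parent map — for instance, order $I(k_0)$ arbitrarily for some reference generation $k_0$, extend to children so that a child's rank is determined lexicographically by (parent's rank, child's rank within that parent), and extend upward to coarser generations analogously — and then assign each boundary point, at each level, to its minimal admissible index constrained so as to match the parent choice already made one generation coarser. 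The hereditary compatibility of the order with the parent map propagates both \eqref{eq:cover} and \eqref{eq:nested}, and Borel measurability of each $Q^k_\alpha$ is automatic since only countably many closed sets enter the construction.
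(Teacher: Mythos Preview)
The paper does not give its own proof of this theorem: it is quoted from \cite{oma} (itself an elaboration of Christ \cite{Christ90}) and used as a black box. Your sketch is essentially the argument carried out in that reference: define the closed cubes by \eqref{eq:closedCube}, obtain the outer radius $C_1=2C_0$ by telescoping the chain bound $d(x^{j+1}_{\eta_{j+1}},x^j_{\eta_j})<C_0\delta^j$, obtain the inner radius $c_1=\tfrac13 c_0$ from the contrapositive of the first proximity implication at level $k+1$ combined with a geometric tail estimate, prove covering by a pigeonhole over the finitely many level-$k$ ancestors near a given point, and finally distribute the residual boundary points. So at the level of strategy your proposal matches the source the paper cites.

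Two places in your write-up deserve a little more care before they count as complete. First, the identity $\operatorname{int}\bar Q^k_\alpha=(\bigcup_{\gamma\neq\alpha}\bar Q^k_\gamma)^c$ needs the implication $\operatorname{int}\bar Q^k_\alpha\cap\bar Q^k_\gamma=\emptyset$ for $\gamma\neq\alpha$, which does not follow merely from covering and from the ball $B(x^k_\alpha,c_1\delta^k)$ being disjoint from the other $\bar Q^k_\gamma$. One clean way: if $x\in\operatorname{int}\bar Q^k_\alpha\cap\bar Q^k_\gamma$, pick $x^{\ell}_\beta$ with $(\ell,\beta)\leq(k,\gamma)$ so close to $x$ that $B(x^{\ell}_\beta,c_1\delta^{\ell})\subseteq\bar Q^k_\alpha$; then approximate $x^{\ell}_\beta$ by generators $x^{m}_\mu$ of $\bar Q^k_\alpha$ with $m\geq\ell$ lying in this ball, infer that their level-$\ell$ ancestor is $(\ell,\beta)$, and conclude $(\ell,\beta)\leq(k,\alpha)$, contradicting unique ancestry. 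Second, the ``assembly'' of the Borel $Q^k_\alpha$ is the genuinely delicate step, and your hereditary well-ordering idea is the right one, but the phrase ``extend upward to coarser generations analogously'' is where the work hides: you must actually check that the minimal-index assignment at level $k+1$ lands inside the minimal-index assignment at level $k$, for which the ordering on $I(k+1)$ must refine the parent ordering lexicographically at \emph{every} $k$, not just relative to a single reference level. In \cite{oma} this is handled by first fixing, for each $x$ and each $k$, a preliminary label via the nearest-ancestor sequence and then resolving ties by a serial ordering; your scheme is equivalent once made precise.
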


If either the system of points or the partial order is not given a priori, their existence already follows from the assumptions; however, we want to emphasize the point that any given system of points and partial order can be used as a starting point.

\begin{remark}\label{rem:inclusion}
The proof \cite{oma} shows that the second inclusion in \eqref{eq:contain} is true with $Q^k_\alpha$ replaced by $\bar{Q}^k_\alpha$. 
\end{remark}

%In \cite{oma} (cf. \cite{Christ90}) it was shown that in any geometrically doubling metric space, given parameters %\footnote{In \cite{oma}, the condition is expressed in terms of the equivalent parameters $c_0=3c_1$ and $C_0=\tfrac12C_1$.}
%$\delta\in (0,1)$ and $0<c_1\leq C_1<\infty$ that satisfy $2C_1\delta\leq c_1\leq\tfrac16 C_1$, and a set of points $\{x^k_\alpha\}_\alpha$ for every $k\in \Z$ with the properties that
%\begin{equation}\label{ehdot}
%  d(x_{\alpha}^k,x_{\beta}^k)\geq 3c_1\delta^k\quad(\alpha\neq\beta),\qquad
%  \min_{\alpha}d(x,x^k_{\alpha})<\tfrac12C_1\delta^k\quad\forall\,x\in X,
%\end{equation}
%there exists a system of dyadic cubes. Up to changing the parametric conditions, the result extends into quasi-metric spaces. The construction involves defining a \textit{partial order} $\leq$ for the dyadic points $x^k_\alpha$, and proceeds through defining open $\tilde{Q}^k_{\alpha}$ and closed $\bar{Q}^k_{\alpha}$ dyadic cubes first.

\subsection{Plumpness}

We recall from the Introduction that a set $E\subseteq X$ is said to be \textit{plump with parameters} $R>0$ and $b\in (0,1)$ if $E$ satisfies the following:
\begin{equation}\label{eq:PlumpProperty}
\text{For all $y\in E$ and $0<r\leq R$, there exists $z\in X\colon B(z,br)\subseteq B(y,r)\cap E$.}
\end{equation}

\begin{remark}
`Plumpness' has a close connection with other geometric notations. It is easily verified that in $\R^n$, plumpness is equivalent to the \textbf{corkscrew condition} by Jerison and Kenig \cite{JerisonKenig}: A domain $\Omega$ in $\R^n$ satisfies the interior (exterior) corkscrew condition if for some $R >0$ and $b\in (0,1)$, and every $y\in\partial \Omega$ and $0< r< R$, there exists a non-tangential point $z\in B(y,r)\cap\Omega$ ($z\in B(y,r)\cap(\R^n\setminus\bar{\Omega})$) such that $\dist(z,\partial\Omega)\geq br$. 
\end{remark}

\begin{examples}
1. Examples of plump sets in $\R^n$ are provided by \textbf{John domains}, first introduced by F. John \cite{John}: A domain $\Omega$ in $\R^n$ is $(\alpha,\beta)$-John domain if there exists a point $x_0\in \Omega$ (`central point') such that given any $x\in \Omega$, there exists a rectifiable path $\gamma\colon [0,\ell]\to\Omega$ which is parametrized by arclength, such that $\gamma(0)=x, \gamma(\ell)=x_0, \ell\leq\beta$ and
\[\dist(\gamma(t),\partial\Omega)\geq \frac{\alpha}{\ell}\, t\quad \forall\; t\in[0,\ell]. \]  
Every John domain satisfies the corkscrew condition \cite[Lemma 6.3]{MartioVuorinen} and thus, is a plump set. %In particular, the \textit{Koch snowflake} is a John domain \cite{MartioSarvas} and thus, a plump set.

2. The well established \textbf{non-tangentially accessible domains} (NTA domains), introduced by Jerison and Kenig \cite{JerisonKenig}, satisfy both the interior and exterior corkscrew condition by definition. Thus, by the main result of the present paper, every bounded NTA domain in $\R^n$ qualifies as a dyadic cube.

\end{examples}

We record the following easy observation.

\begin{lemma}\label{lem:plump}
Suppose $E$ is plump with parameters $R>0$ and $b\in (0,1)$. Then 

(1) $E$ is plump with any parameters $\tilde{R}\leq R$ and $0<\tilde{b}\leq b$;

(2) $E$ is plump with any parameters $\tilde{R}\geq R$ and $\tilde{b}\in (0,1)$ that satisfy $\tilde{R}\tilde{b}\leq Rb$.
\end{lemma}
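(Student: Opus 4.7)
The plan is to deduce both parts directly from the defining property \eqref{eq:PlumpProperty}, by choosing the auxiliary radius in the plumpness condition appropriately and exploiting monotonicity of balls under decreasing radii.

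For (1), I would simply fix $y\in E$ and $0<r\leq\tilde R$; since $r\leq\tilde R\leq R$, the hypothesis yields $z\in X$ with $B(z,br)\subseteq B(y,r)\cap E$. Because $\tilde b\leq b$, we have $B(z,\tilde b r)\subseteq B(z,br)$, and the inclusion is inherited. So (1) is a direct monotonicity statement in both parameters.

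For (2) I would first note that the hypotheses $\tilde R\geq R$ and $\tilde R\tilde b\leq Rb$ together force $\tilde b\leq b$ (divide the second by $\tilde R$ and use the first). Then, given $y\in E$ and $0<r\leq\tilde R$, I would split into two cases. If $r\leq R$, then part (1) (or directly the original plumpness) already gives the desired $z$. The genuinely new case is $R<r\leq \tilde R$, where one cannot apply the original plumpness at radius $r$. The trick is to instead apply it at the allowed radius $R$: this yields $z\in X$ with $B(z,bR)\subseteq B(y,R)\cap E\subseteq B(y,r)\cap E$. It then suffices to check that $\tilde b r\leq bR$, which is exactly the content of $\tilde b r\leq \tilde b\tilde R\leq bR$, coming from $r\leq\tilde R$ and the hypothesis $\tilde R\tilde b\leq Rb$. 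Hence $B(z,\tilde b r)\subseteq B(z,bR)\subseteq B(y,r)\cap E$, as required.

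Neither step involves any real obstacle; the whole argument is a matter of tracking constants, and the only mildly non-trivial idea is the substitution $r\mapsto R$ in the large-$r$ regime of (2), which works precisely because plumpness at a smaller scale produces a ball that is small enough to fit inside $B(y,r)$ with room to spare for the reduced inner radius $\tilde b r$.
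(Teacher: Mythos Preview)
Your argument is correct and follows essentially the same idea as the paper's proof: apply the defining plumpness condition at a suitably reduced radius and use monotonicity of balls. The only stylistic difference is that the paper handles part~(2) uniformly by setting $t:=rR/\tilde R\leq R$ (so that $\tilde b r=\tfrac{\tilde b\tilde R}{R}\,t\leq bt$), thereby avoiding your case split between $r\leq R$ and $r>R$; your choice of auxiliary radius $\min(r,R)$ achieves the same end.
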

\begin{proof}
The first assertion is obvious. For the second assertion, let $\tilde{R}\geq R$ and $\tilde{b}\in (0,1)$ be such that $\tilde{R}\tilde{b}\leq Rb$. Suppose $y\in E$ and $0<r\leq \tilde{R}$. Then $0<rR/\tilde{R}=:t\leq R$ so that there exists $z\in E$ such that
\begin{equation*}
 B(z,\tilde{b}r)=B(z,\frac{\tilde{b}\tilde{R}}{R}t)\subseteq B(z,bt)\subseteq B(y,t)\cap E
  \subseteq B(y,r)\cap E.\qedhere
\end{equation*}
\end{proof}

We give next a definition for a dyadically plump set, that better suits our purposes. 

\begin{definition}
A set $E\subseteq X$ is \textit{dyadically plump (d-plump) with parameters} $\delta\in (0,1), m\in \Z$ and $0<b_0 \leq B_0<\infty$ if $E$ satisfies the following:
\begin{equation}\label{eq:dyadicPlumpProperty}
\text{For all $y\in E$ and $k\geq m$, there exists $z\in X\colon B(z,b_0\delta^k)\subseteq B(y,B_0\delta^k)\cap E$.}
\end{equation}
\end{definition}

Qualitatively, set $E$ is plump if and only if $E$ is d-plump. Quantitatively, the relationship is formulated in the following lemma.

\begin{lemma}\label{lem:plumpness}
If $E$ is a plump set with parameters $R>0$ and $b\in (0,1)$, then $E$ is d-plump with any parameters $\delta\in (0,1), m\in \Z$ and $0<b_0 \leq B_0<\infty$ that satisfy $b_0/B_0\leq b$ and $B_0\delta^m\leq R$. Conversely, if $E$ is a d-plump set with parameters $\delta\in (0,1), m\in \Z$ and $0<b_0 \leq B_0<\infty$, then $E$ is plump with any parameters $R>0$ and $b\in (0,1)$ that satisfy $b\leq \delta b_0/B_0$ and $R\leq B_0\delta^{m-1}$.
\end{lemma}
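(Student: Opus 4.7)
My plan is pure quantitative bookkeeping: in both directions I would produce the ball demanded by one plumpness concept by applying the other at a carefully rescaled radius, and then verify the nested containment using the hypotheses on the parameters.

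For the forward implication, I would fix $y\in E$ and $k\geq m$ and apply \eqref{eq:PlumpProperty} at radius $r:=B_0\delta^k$. This is legal because $B_0\delta^k\leq B_0\delta^m\leq R$. The conclusion provides $z\in X$ with $B(z,bB_0\delta^k)\subseteq B(y,B_0\delta^k)\cap E$. Since by assumption $b_0\leq bB_0$, shrinking the ball around $z$ yields $B(z,b_0\delta^k)\subseteq B(z,bB_0\delta^k)\subseteq B(y,B_0\delta^k)\cap E$, which is exactly \eqref{eq:dyadicPlumpProperty}.

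For the converse, given $y\in E$ and $0<r\leq R$, the goal is to produce an integer $k\geq m$ satisfying simultaneously
\[
  B_0\delta^k\leq r\quad\text{and}\quad b_0\delta^k\geq br.
\]
Once such a $k$ is in hand, d-plumpness at generation $k$ yields $z$ with $B(z,b_0\delta^k)\subseteq B(y,B_0\delta^k)\cap E\subseteq B(y,r)\cap E$, and the inclusion $B(z,br)\subseteq B(z,b_0\delta^k)$ (from the second inequality above) completes the argument.

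The only nontrivial point is securing $k\geq m$, which I would handle with a small case split. If $r\geq B_0\delta^m$, then $k=m$ works: $B_0\delta^m\leq r$ is immediate, and the second inequality becomes $b_0\delta^m\geq br$, i.e.\ $b\leq b_0\delta^m/r$, which follows from $r\leq R\leq B_0\delta^{m-1}$ and the hypothesis $b\leq \delta b_0/B_0$ via $b_0\delta^m/r\geq b_0\delta^m/(B_0\delta^{m-1})=\delta b_0/B_0$. If instead $r<B_0\delta^m$, I would let $k$ be the unique integer with $B_0\delta^k\leq r<B_0\delta^{k-1}$; this automatically gives $k>m$, and from $B_0\delta^k>\delta r$ together with $b\leq\delta b_0/B_0$ I obtain $b_0\delta^k>b_0\delta r/B_0\geq br$, as required. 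The only real obstacle, the boundary case $r=R=B_0\delta^{m-1}$ where the naive choice $\delta^k\leq r/B_0<\delta^{k-1}$ would give $k=m-1$, is precisely what the case split circumvents; everything else is algebra on the parameters.
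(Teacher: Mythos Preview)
Your proof is correct and follows the same rescaling strategy as the paper. The only difference is in the converse: the paper avoids your case split by taking the unique integer $k$ with $B_0\delta^k<r\leq B_0\delta^{k-1}$ (strict inequality on the left rather than the right), which directly forces $k\geq m$ from $r\leq R\leq B_0\delta^{m-1}$ and handles the boundary case $r=B_0\delta^{m-1}$ without separate treatment.
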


\begin{proof}
For the first assertion, let $\delta\in (0,1)$, and suppose that $0<b_0\leq B_0<\infty$ are such that $b_0/B_0\leq b$. Then pick $m\in \Z$ that satisfies $B_0\delta^m\leq R$. Let $y\in E$ and $k\geq m$. Then, by \eqref{eq:PlumpProperty} with $r=B_0\delta^k\leq B_0\delta^m\leq R$, there exists $z\in X$ such that
\[B(z,b_0\delta^k)\subseteq B(z,br)\subseteq B(y,r)\cap E=B(y,B_0\delta^k)\cap E,\]
which shows that $E$ is d-plump.

For the second assertion, suppose that $b>0$ and $R>0$ are such that $b\leq \delta b_0/B_0$ and $R\leq B_0\delta^{m-1}$. Let $y\in E$ and $0<r\leq R$, and let $k\geq m$ be an integer that satisfies $B_0\delta^{k}<r\leq B_0\delta^{k-1}$. Then, by \eqref{eq:dyadicPlumpProperty}, there exists $z\in X$ such that
\begin{equation*}
  B(z,br)\subseteq B(z,b_0\delta^k)\subseteq B(y,B_0\delta^k)\cap E \subseteq B(y,r)\cap E. \qedhere
\end{equation*}
\end{proof}

\section{The proof of the main result}

In this section we will provide a proof for the quantitative version of our main result, Theorem~\ref{thm:main}, which is formulated in Propositions~\ref{prop:cube_is_plum} and \ref{prop:existsDyadSystem} below. 

\begin{proposition}\label{prop:cube_is_plum}
Suppose that $\mathscr{D}$ is a family of dyadic cubes with parameters $\delta\in (0,1)$ and $0<c_1\leq C_1<\infty$. Let $Q^m_\alpha\in\mathscr{D}$ be a dyadic cube of side length $\delta^m$. Then both $F\in \{Q^m_\alpha , X\setminus Q^m_\alpha \}$ are d-plump with parameters $\delta ,m, b_0=c_1$ and $B_0=c_1+C_1$. In particular, $Q^m_\alpha$ is plump with parameters $b=\delta c_1/(c_1+C_1)$ and
\begin{equation*}
  R= \frac{c_1+C_1}{2\delta C_1}\diam(Q^m_\alpha).
\end{equation*}
\end{proposition}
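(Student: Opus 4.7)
The plan is to exploit the nested structure of the dyadic system: for any $y$ in $F\in\{Q^m_\alpha, X\setminus Q^m_\alpha\}$ and any generation $k\geq m$, the generation-$k$ cube containing $y$ sits entirely inside $F$, and its centre (together with its inner ball) provides the required witness point $z$.

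More precisely, fix $y\in F$ and $k\geq m$. By the covering property \eqref{eq:cover}, there is a unique $Q^k_\beta$ with $y\in Q^k_\beta$. Because of the nestedness property \eqref{eq:nested} and the fact that $k\geq m$, the cube $Q^k_\beta$ is either contained in $Q^m_\alpha$ or disjoint from it. If $F=Q^m_\alpha$, then $y\in Q^k_\beta\cap Q^m_\alpha\neq\emptyset$ forces $Q^k_\beta\subseteq Q^m_\alpha$; if $F=X\setminus Q^m_\alpha$, then $y\notin Q^m_\alpha$ excludes containment in $Q^m_\alpha$, so $Q^k_\beta\cap Q^m_\alpha=\emptyset$, giving $Q^k_\beta\subseteq X\setminus Q^m_\alpha=F$. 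Thus in either case $Q^k_\beta\subseteq F$.

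Now choose $z:=x^k_\beta$. By \eqref{eq:contain} the inner ball $B(z,c_1\delta^k)$ lies inside $Q^k_\beta\subseteq F$, and the outer inclusion in \eqref{eq:contain} gives $d(y,z)<C_1\delta^k$ since $y\in Q^k_\beta\subseteq B(x^k_\beta,C_1\delta^k)$. The triangle inequality then yields
\begin{equation*}
  B(z,c_1\delta^k)\subseteq B(y,d(y,z)+c_1\delta^k)\subseteq B(y,(c_1+C_1)\delta^k)\cap F,
\end{equation*}
which is precisely the d-plumpness condition \eqref{eq:dyadicPlumpProperty} with $b_0=c_1$ and $B_0=c_1+C_1$.

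For the ``in particular'' statement one applies the converse direction of Lemma~\ref{lem:plumpness}: from d-plumpness with $(b_0,B_0)=(c_1,c_1+C_1)$ one obtains ordinary plumpness for any $b\leq\delta b_0/B_0=\delta c_1/(c_1+C_1)$ and $R\leq B_0\delta^{m-1}=(c_1+C_1)\delta^{m-1}$. Since by \eqref{eq:contain} we have $\diam Q^m_\alpha\leq 2C_1\delta^m$, the value $R=\tfrac{c_1+C_1}{2\delta C_1}\diam Q^m_\alpha\leq(c_1+C_1)\delta^{m-1}$ is admissible, yielding exactly the claimed quantitative bounds.

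No serious obstacle is anticipated; the only point that needs slight care is checking that for $y\in X\setminus Q^m_\alpha$ the generation-$k$ cube around $y$ does indeed miss $Q^m_\alpha$, which is immediate from the disjointness alternative in \eqref{eq:nested} together with $k\geq m$.
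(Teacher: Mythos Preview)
Your proof is correct and follows essentially the same approach as the paper: both pick the generation-$k$ cube $Q^k_\beta$ containing $y$, take $z=x^k_\beta$, and combine the inner and outer ball inclusions from \eqref{eq:contain} via the triangle inequality; the ``in particular'' part is likewise deduced from Lemma~\ref{lem:plumpness} together with $\diam Q^m_\alpha\leq 2C_1\delta^m$. The only cosmetic difference is that the paper reduces the case $F=X\setminus Q^m_\alpha$ to the cube case by passing to the sibling $Q^m_\beta\ni y$, whereas you handle it directly via the disjointness alternative in \eqref{eq:nested}.
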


\begin{proof}
It suffices to consider the case $F=Q^m_\alpha$ and $y\in Q^m_\alpha$ since otherwise, $y\in Q^m_\beta$ for some $\beta\neq \alpha$, and we argue similarly with $\alpha$ replaced by $\beta$.

Suppose $y\in Q^m_\alpha$ and $k\geq m$. By \eqref{eq:cover} and \eqref{eq:nested}, $y\in Q^{k}_\beta$ for some $Q^{k}_\beta\subseteq Q^{m}_\alpha$. Thus, by \eqref{eq:contain},
\begin{equation}\label{eq:2}
  d(y,x^{k}_\beta )<C_1\delta^{k}.
\end{equation}
We will show that $z=x^k_\beta$ satisfies \eqref{eq:dyadicPlumpProperty}. First note that $B(x^k_\beta,c_1\delta^k)
\subseteq Q^{k}_\beta\subseteq Q^m_\alpha$.
We are left to show that $B(x^{k}_\beta,c_1\delta^k) \subseteq B(y,B_0\delta^k)$. To this end, suppose $x\in B(x^{k}_\beta,c_1\delta^k)$ and note that, by \eqref{eq:2},
\begin{equation*}
  d(x,y)\leq d(x,x^{k}_\beta)+d(x^{k}_\beta,y)
  \leq c_1\delta^k + C_1\delta^{k}
  =B_0\delta^k.
\end{equation*}
This shows that $E$ is d-plump with parameters $\delta, m, b_0=c_1$ and $B_0=c_1+C_1$.
By Lemma~\ref{lem:plumpness}, $Q^m_\alpha$ is plump and we may choose $b=\delta b_0/B_0=\delta c_1/(c_1+C_1)$ and $R=B_0\delta^{m-1}=(c_1+C_1)\delta^{m-1}$. Since $\diam(Q^m_\alpha)\leq 2C_1\delta^m$ by \eqref{eq:contain}, the proof is completed by Lemma~\ref{lem:plump}(1).
\end{proof}

\begin{proposition}\label{prop:existsDyadSystem}
Let $E\subseteq X$, and suppose both $F\in \{E, X\setminus E\}$ are d-plump with parameters $\delta\in (0,1), m\in\Z$ and $0<b_0\leq B_0<\infty$ where $\diam E\leq B_0\delta^m$ and $12B_0\delta\leq b_0$. Then the Christ-type dyadic cubes may be constructed in such a way that $E$ arises as a dyadic cube. More precisely, there exists a dyadic system $\mathscr{D}$ with parameters $\delta, c_1=b_0/3$ and $C_1=2B_0$, and a dyadic cube $Q\in\mathscr{D}$ of side length $\delta^m$ such that $\tilde{Q}\subseteq E\subseteq \bar{Q}$. 
\end{proposition}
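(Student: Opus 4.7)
The plan is to construct a system of dyadic points $\{x^k_\beta\}$ and a compatible partial order that fulfill the hypotheses of Theorem~\ref{thm:existence} with $c_0 = b_0$ and $C_0 = B_0$ (so that the hypothesis $12 C_0 \delta \leq c_0$ becomes our assumption $12 B_0 \delta \leq b_0$), and then to arrange that one cube at generation $m$ sandwiches $E$. I would start by using d-plumpness of $E$ and of $X\setminus E$ at level $m$ to select reference points $y_0 \in E$ with $B(y_0, b_0\delta^m) \subseteq E$ and (if $E \neq X$) $y_0' \in X\setminus E$ with $B(y_0', b_0\delta^m) \subseteq X\setminus E$; automatically $d(y_0, y_0') \geq b_0\delta^m$. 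The point $y_0$ will be the center $x^m_\alpha$ of the cube destined to become $E$, and $\diam E \leq B_0\delta^m$ ensures that $E$ fits inside a ball of radius $B_0\delta^m$ around $y_0$.

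For each finer level $k > m$ I would construct $\mathcal{N}_k$ in two stages: first take maximal $b_0\delta^k$-separated subsets $\mathcal{N}_k^E \subseteq \mathcal{E}_k := \{z \in E : B(z, b_0\delta^k) \subseteq E\}$ and $\mathcal{N}_k^{X\setminus E} \subseteq (X\setminus E)_k$ (both nonempty by d-plumpness), then extend $\mathcal{N}_k^E \cup \mathcal{N}_k^{X\setminus E}$ to a maximal $b_0\delta^k$-separated set $\mathcal{N}_k$ in $X$, with any extras labelled by which side of $E$ they lie on. The buffer property of $\mathcal{E}_k$ and $(X\setminus E)_k$ automatically gives $d(\mathcal{N}_k^E, \mathcal{N}_k^{X\setminus E}) \geq b_0\delta^k$, so the union is a legitimate $b_0\delta^k$-separated set. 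At coarser levels $k \leq m$, it is enough to take $\mathcal{N}_k$ to be any maximal $b_0\delta^k$-separated set in $X$ containing $y_0$, so that $(m,\alpha)$ has a unique chain of ancestors.

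For the dyadic partial order, I would assign to each $(k+1,\beta)$ its unique \emph{forced} parent $(k,\alpha)$ whenever $d(x^{k+1}_\beta, x^k_\alpha) < b_0\delta^k/2$, and otherwise \emph{freely} choose a parent at distance $< B_0\delta^k$. The free choices would be made so that every $\mathcal{N}_{k+1}^E$-center is assigned an $\mathcal{N}_k^E$-parent and every $\mathcal{N}_{k+1}^{X\setminus E}$-center an $\mathcal{N}_k^{X\setminus E}$-parent; the existence of such a nearby same-type parent comes from applying d-plumpness at level $k$ to $x^{k+1}_\beta$ and then invoking maximality of $\mathcal{N}_k^E$ (resp.\ $\mathcal{N}_k^{X\setminus E}$). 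The crucial consistency check is that any \emph{forced} parent of an $\mathcal{N}_{k+1}^E$-center must itself be of type $E$: if it were in $\mathcal{N}_k^{X\setminus E}$, its $b_0\delta^k$-buffer into $X\setminus E$ would put it at distance $\geq b_0\delta^k > b_0\delta^k/2$ from the $E$-point $x^{k+1}_\beta$, contradicting the forcing condition. This rules out type-crossing, so the induced partial order respects the $E$/$(X\setminus E)$ partition at every level and every $\mathcal{N}_k^E$-center descends from $(m,\alpha)$.

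With the data in place, Theorem~\ref{thm:existence} outputs a dyadic system $\mathscr{D}$ with the stated parameters $\delta$, $c_1=b_0/3$, $C_1=2B_0$. To finish, I would verify the sandwich $\tilde{Q}^m_\alpha \subseteq E \subseteq \bar{Q}^m_\alpha$ using \eqref{eq:closedCube} and \eqref{eq:openCube}: for $y \in E$, d-plumpness yields $\mathcal{N}_k^E$-centers converging to $y$, and since they all descend from $(m,\alpha)$ they certify $y \in \bar{Q}^m_\alpha$; conversely, every $y \in X\setminus E$ is approximated by $\mathcal{N}_k^{X\setminus E}$-centers descending from some $(m,\alpha')$ with $\alpha' \neq \alpha$, placing $y$ in $\bar{Q}^m_{\alpha'}$ and hence outside $\tilde{Q}^m_\alpha$. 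The main obstacle is the book-keeping in the partial-order construction---in particular, ensuring simultaneously that (i) free choices of parents are available within the correct type and distance threshold and (ii) no forced parent crosses types; the hypothesis $12 B_0\delta \leq b_0$ supplies exactly the geometric slack between consecutive scales that makes this possible.
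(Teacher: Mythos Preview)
Your overall strategy matches the paper's proof, but the execution has two gaps that would break the argument as written.

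First, at levels $k>m$ you extend $\mathcal{N}_k^E\cup\mathcal{N}_k^{X\setminus E}$ to a maximal $b_0\delta^k$-separated set by adding ``extras'' that need not carry the buffer property. Your consistency check only shows that a forced parent of an $\mathcal{N}_{k+1}^E$-center cannot lie in $\mathcal{N}_k^{X\setminus E}$; it does not exclude an extra lying in $X\setminus E$ (with no buffer) from being the forced parent, and once that happens the chain of $E$-type ancestors up to $(m,\alpha)$ is lost. The fix---and this is exactly what the paper does---is to notice that no extension is needed: for any $x\in F$ d-plumpness gives $z$ with $B(z,b_0\delta^k)\subseteq F\cap B(x,B_0\delta^k)$, and maximality of $\mathcal{N}_k^F$ in $\mathcal{F}_k$ then places some center in $B(z,b_0\delta^k)\subseteq B(x,B_0\delta^k)$. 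Hence $\mathcal{N}_k^E\cup\mathcal{N}_k^{X\setminus E}$ is already $B_0\delta^k$-dense in $X$, and every center carries the buffer.

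Second, you treat generation $m$ as a coarse level, taking an arbitrary maximal $b_0\delta^m$-separated set containing $y_0$. This set may contain further points of $E$, or points of $X\setminus E$ without buffer, and either kind can become the forced parent of an $\mathcal{N}_{m+1}^E$-center: your own estimate only yields a separation of $b_0\delta^{m+1}<\tfrac12 b_0\delta^m$ in that case. Then some $E$-descendants never reach $(m,\alpha)$ and the inclusion $E\subseteq\bar{Q}^m_\alpha$ can fail. The paper instead builds level $m$ exactly like the finer levels (so every level-$m$ point in $X\setminus E$ carries the buffer $b_0\delta^m$) and then \emph{discards all but one} $E$-point; density in $E$ survives because $\diam E\leq B_0\delta^m$ already puts every $y\in E$ within $B_0\delta^m$ of $y_0$. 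With only buffered $X\setminus E$-points and a single $E$-point at generation $m$, the forced-parent check goes through at the critical transition $m+1\to m$.

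A minor point for the final step: for $y\in X\setminus E$, the approximating centers at different scales $k$ may descend from different $(m,\alpha')$; one needs geometric doubling to reduce to finitely many candidates and pass to a subsequence landing in a single $\bar{Q}^m_{\alpha'}$.
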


The proof of Proposition~\ref{prop:existsDyadSystem} consists of three lemmata.
%The construction of dyadic cubes calls for the choice of dyadic points $x^k_\alpha$.

\begin{lemma}[Choice of dyadic points]\label{lem:dyadic_points}
Under the assumptions and with the fixed values of parameters as in Proposition~\ref{prop:existsDyadSystem}, let $F\in \{E, X\setminus E\}$. Then for every $k\in \Z$, there exists a set $\{x^k_\alpha\}$ of points with the following properties: For all $k\in X$,
\begin{equation}\label{eq:distribution1}
  d(x^k_\alpha,x^k_\beta)\geq b_0\delta^k \quad (\alpha\neq \beta),\qquad
  \min_\alpha d(x,x^k_\alpha)<B_0\delta^k\quad\forall\ x\in X;
%\quad \min_{\alpha}d(x,x^k_\alpha)<C_0\delta^k\quad \forall\; x\in X.
\end{equation}
If $k\geq m$, then moreover
\begin{equation}\label{eq:distribution2}
  \min_{\alpha:x^k_\alpha\in F} d(x,x^k_\alpha)<B_0\delta^k\quad\forall\ x\in F\in\{E,X\setminus E\};
%\quad \min_{\alpha}d(x,x^k_\alpha)<C_0\delta^k\quad \forall\; x\in X.
\end{equation}
and
%\begin{equation}\label{eq:distribution2}
%\begin{split}
%& \text{If $k\geq m$, then for every } x\in F \text{ there exists } x^k_\alpha\in F\text{ such that } d(x,x^k_\alpha)<C_0\delta^k; \\
%& \text{If $k< m$, } \min_{\alpha} d(x,x^k_\alpha)<C_0\delta^k\quad \forall\; x\in X;
%\end{split}
%\end{equation}
\begin{equation}\label{eq:far_from_bdr}
  \dist(x^k_\alpha ,X\setminus F)\geq b_0\delta^k \quad \forall\ x^k_\alpha\in F.
\end{equation}
If $k=m$, then there is exactly one $\alpha$ such that $x^m_\alpha\in E$.
\end{lemma}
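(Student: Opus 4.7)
The plan is to construct the dyadic points level by level, using the d-plumpness of both $E$ and $X\setminus E$ in an essential way for $k\geq m$. For $k<m$ none of the boundary-aware conditions need hold, so I simply let $\{x^k_\alpha\}$ be any maximal $b_0\delta^k$-separated subset of $X$ (existence by a greedy/Zorn argument, countability from geometric doubling). Maximality automatically yields $\min_\alpha d(x,x^k_\alpha)<b_0\delta^k\leq B_0\delta^k$ and hence \eqref{eq:distribution1}.

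For $k\geq m$ the central idea is to select points that lie deeply inside each of $E$ and $X\setminus E$. Introduce the deep interior
\[
 \mathcal{D}^k_F:=\{z\in F:\dist(z,X\setminus F)\geq b_0\delta^k\},\qquad F\in\{E,X\setminus E\}.
\]
The d-plumpness property \eqref{eq:dyadicPlumpProperty} is exactly the statement that every $y\in F$ admits a witness $z\in\mathcal{D}^k_F$ with $d(y,z)<B_0\delta^k$. I then take, via Zorn's lemma, a maximal $b_0\delta^k$-separated subset $S^k_F\subseteq\mathcal{D}^k_F$ for each $F$, and define $\{x^k_\alpha\}:=S^k_E\cup S^k_{X\setminus E}$.

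Verification is short. \textbf{Separation} holds within each $S^k_F$ by construction; across the two, if $z\in S^k_E$ and $w\in S^k_{X\setminus E}$ then $w\in X\setminus E$, so $d(z,w)\geq\dist(z,X\setminus E)\geq b_0\delta^k$. \textbf{Deepness} \eqref{eq:far_from_bdr} follows from $S^k_F\subseteq\mathcal{D}^k_F$. For \textbf{covering}, given $y\in F$ the d-plump witness $z$ either lies in $S^k_F$ or, by maximality, is within $b_0\delta^k$ of some $x^k_\alpha\in S^k_F$, giving $d(y,x^k_\alpha)<(B_0+b_0)\delta^k$; this yields both \eqref{eq:distribution1} (taking the $F$ that contains $x$) and \eqref{eq:distribution2}. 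Finally, \textbf{uniqueness at} $k=m$: since $\diam E\leq B_0\delta^m$, a single deep point $z_0\in\mathcal{D}^m_E$, produced by d-plumpness applied to any $y\in E$, already $B_0\delta^m$-covers $E$ via the triangle inequality, so I replace $S^m_E$ by the singleton $\{z_0\}$.

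The main obstacle I anticipate is the mild mismatch in the covering constant: the naive maximality argument produces covering at $(B_0+b_0)\delta^k$ rather than the stated $B_0\delta^k$. Since $b_0\leq B_0$ the inflation is at most a factor of two, and is absorbed either by a harmless re-parameterization of the d-plumpness constants via Lemma~\ref{lem:plumpness} or by observing that the hypothesis $12B_0\delta\leq b_0$ required for the subsequent application of Theorem~\ref{thm:existence} provides ample slack. The only other subtlety is the coupling between $S^k_E$ and $S^k_{X\setminus E}$, which is handled cleanly by the observation that deep points on opposite sides of $\partial E$ are automatically $b_0\delta^k$-separated from each other.
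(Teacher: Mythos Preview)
Your approach is essentially identical to the paper's: for $k\geq m$ take maximal $b_0\delta^k$-separated subsets of the deep interiors $\mathcal{D}^k_F$, merge them, check cross-separation via deepness, and for $k<m$ take any maximal $b_0\delta^k$-net in $X$; at level $m$ replace the $E$-part by a singleton.

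The only discrepancy is your self-inflicted ``obstacle'' at the covering step, which in fact does not exist. You bound $d(y,x^k_\alpha)\leq d(y,z)+d(z,x^k_\alpha)<(B_0+b_0)\delta^k$ via the triangle inequality, but the d-plumpness witness $z$ satisfies the \emph{inclusion} $B(z,b_0\delta^k)\subseteq B(y,B_0\delta^k)$, not merely $d(y,z)<B_0\delta^k$. Since maximality of $S^k_F$ places some $x^k_\alpha\in B(z,b_0\delta^k)$, you get $x^k_\alpha\in B(y,B_0\delta^k)$ directly, i.e.\ $d(y,x^k_\alpha)<B_0\delta^k$ with the sharp constant. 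This is exactly what the paper does, and it makes your proposed re-parameterization via Lemma~\ref{lem:plumpness} unnecessary.
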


\begin{proof}
We first observe that
\begin{equation}\label{eq:nonEmpty}
  \{x\in F:d(x,X\setminus F)\geq b_0\delta^m\}\neq\emptyset
\end{equation}
for both choices of $F$. To this end, pick a $y\in F$. We apply \eqref{eq:dyadicPlumpProperty} with $k=m$, and find a point $z\in X$ such that
\begin{equation*}
  %B(z,c_0\delta^m)\subseteq 
  B(z,b_0\delta^m)\subseteq F\cap B(y,B_0\delta^m),
\end{equation*}
and thus $d(z,X\setminus F)\geq b_0\delta^m$.

For $k\geq m$ and both choices of $F$, we pick a maximal set $\{x^k_\alpha\}_\alpha$, of points in $F$ that satisfies the two conditions
\begin{equation*}
d(x^k_\alpha,x^k_\beta)\geq b_0\delta^k\quad (\alpha\neq\beta)\quad
\text{and} \quad \dist(x^k_\alpha ,X\setminus F)\geq b_0\delta^k .
\end{equation*}
By \eqref{eq:nonEmpty}, both these collections are nonempty. We equip them with individual labels to form one joint collection $\{x^k_\alpha\}_\alpha$. It still satisfies
\begin{equation*}
  d(x^k_\alpha,x^k_\beta)\geq b_0\delta^k\quad(\alpha\neq\beta):
\end{equation*}
if both $x^k_\alpha,x^k_\beta$ belong to the same $F\in\{E,X\setminus E\}$, this is part of the construction, and if $x^k_\alpha\in F$, $x^k_\beta\in X\setminus F$, then
\begin{equation*}
  d(x^k_\alpha,x^k_\beta)\geq d(x^k_\alpha,X\setminus F)\geq b_0\delta^k\quad(\alpha\neq\beta).
\end{equation*} 
For $k=m$, we through away all but one $x^m_\alpha\in E$, which we denote by $x^m_{\alpha_0}$.

We need to check that the points $x^k_\alpha$ are $B_0\delta^k$-dense in $F$ for both choices of $F$. If $F=E$ and $k=m$, then all $x\in E$ satisfy $d(x,x^m_{\alpha_0})\leq\diam E<B_0\delta^m$. Let then either $k>m$ or $k=m$ and $F=X\setminus E$, and consider an arbitrary $x\in F$. We apply \eqref{eq:dyadicPlumpProperty}, and find a point $z\in F$ such that $B(z,b_0\delta^m)\subseteq F\cap B(x,B_0\delta^m)$. In particular, $\dist (z,X\setminus F)\geq b_0\delta^m$. If also $d(z,x^m_\alpha)\geq b_0\delta^m$ for all $x^m_\alpha\in F$, then $z$ could have been added to the collection $\{x^m_\alpha\}_{\alpha}$, contradicting its maximality. Thus, there exists $x^m_\alpha\in B(z,b_0\delta^m)\subseteq F\cap B(x,B_0\delta^m)$; i.e., $x^m_\alpha\in F$ and $d(x,x^m_\alpha)<B_0\delta^m$, which is as claimed in \eqref{eq:distribution2}.

Finally, for $k<m$, we pick a maximal set $\{x^k_\alpha\}_\alpha$, of points in $X$ that satisfy the first condition in \eqref{eq:distribution1}, and then by maximality also the second condition in \eqref{eq:distribution1}, since $B_0\geq b_0$. For $k>m$, there are no further conditions required, so we are done.
\end{proof}

Note that the point set $\{x^k_\alpha\}_{k,\alpha}$ provided by Lemma~\ref{lem:dyadic_points} is, in particular, a system of dyadic points with parameters $\delta$ and $0<b_0\leq B_0<\infty$ that satisfy $12B_0\delta \leq b_0$.

The next step in the construction of dyadic cubes is the choice of a partial order for the dyadic index pairs $(k,\alpha)$, which describes the child-parent (descendant-ancestor) relationships.

\begin{lemma}[Choice of the dyadic partial order]\label{lem:partial_order}
Under the assumptions and with the fixed values of parameters as in Proposition~\ref{prop:existsDyadSystem}, there is a dyadic partial order $\leq$ among the pairs $(k,\alpha)$ with the following additional property:
\begin{quote}
  If $k\geq m$ and $(\ell,\beta)\leq(k,\alpha)$, then $x^\ell_\beta$ and $x^k_\alpha$ belong to the same set $F\in\{E,X\setminus E\}$.
\end{quote}
%\begin{itemize}
%  \item if $d(x^{k+1}_{\beta},x^k_{\alpha})<b_0\delta^k/2$ then $(k+1,\beta)\leq(k,\alpha)$;
%  \item if $(k+1,\beta)\leq(k,\alpha)$ then $d(x^{k+1}_{\beta},x^k_{\alpha})<B_0\delta^k$;
%     \item for every $(k+1,\beta)$, there is exactly one pair $(k,\alpha)\geq(k+1,\beta)$, called its \emph{parent};
%   \item for every $(k,\alpha)$, there are between $1$ and $M$ pairs $(k+1,\beta)\leq(k,\alpha)$, called its \emph{children};
%  \item $(\ell,\beta)\leq(k,\alpha)$ if and only if $\ell\geq k$ and there are $(j+1,\gamma_{j+1})\leq(j,\gamma_j)$ for all $j=k,k+1,\ldots,\ell-1$,
%   for some $\gamma_k=\alpha,\gamma_{k+1},\ldots,\gamma_{\ell-1},\gamma_{\ell}=\beta$; then $(\ell,\beta)$ and $(k,\alpha)$ are called one another's \emph{descendant} and \emph{ancestor}, respectively.
%\end{itemize}
%Moreover, we have the following property that for both $F\in\{E,X\setminus E\}$:
%\begin{itemize}
%    \item For $k\geq m$, if $x^k_\alpha\in F$and $(\ell,\beta)\leq (k,\alpha)$, then also $x^{\ell}_\beta\in F$.
%\end{itemize}
\end{lemma}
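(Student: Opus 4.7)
The plan is to define $\leq$ by first specifying, for each pair $(k+1,\beta)$, a unique \emph{parent} $(k,\alpha)$ at the preceding generation, and then taking $\leq$ to be the transitive-reflexive closure of the parent-child relation. With this recipe the first three axioms of a dyadic partial order hold automatically, so the content lies in choosing parents so as to respect the proximity implications and, additionally, to respect $F$-membership whenever $k\geq m$.

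Fix $(k+1,\beta)$. The separation in \eqref{eq:distribution1} prevents two distinct indices at level $k$ from simultaneously satisfying $d(x^{k+1}_\beta,x^k_\alpha)<\tfrac12 b_0\delta^k$, since otherwise the triangle inequality would produce two level-$k$ centers within $b_0\delta^k$ of each other. Hence at most one such $\alpha$ exists, and whenever it does we are \emph{forced} to declare it the parent. When no such $\alpha$ exists we are free to choose any $\alpha$ with $d(x^{k+1}_\beta,x^k_\alpha)<B_0\delta^k$, which is possible by the density half of \eqref{eq:distribution1}. In this free case, if moreover $k\geq m$ and $x^{k+1}_\beta\in F\in\{E,X\setminus E\}$, we exploit \eqref{eq:distribution2} to pick an $\alpha$ with $x^k_\alpha\in F$ and $d(x^{k+1}_\beta,x^k_\alpha)<B_0\delta^k$; for $k<m$ any admissible $\alpha$ suffices.

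The remaining point is that the forced case, when it applies with $k\geq m$ and $x^{k+1}_\beta\in F$, automatically lands in the same $F$. Suppose towards a contradiction that $d(x^{k+1}_\beta,x^k_\alpha)<\tfrac12 b_0\delta^k$ while $x^k_\alpha\in X\setminus F$. Since $k\geq m$, property \eqref{eq:far_from_bdr} applied with $X\setminus F$ in place of $F$ yields
\begin{equation*}
d(x^{k+1}_\beta,x^k_\alpha)\geq \dist(x^k_\alpha,F)\geq b_0\delta^k,
\end{equation*}
contradicting the strict inequality. Once one-step $F$-compatibility is in hand, it propagates by induction along any chain $\alpha=\eta_k,\eta_{k+1},\ldots,\eta_\ell=\beta$, yielding the stated additional property. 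I expect the main obstacle to be precisely this forced-case check: the naive bound $\dist(x^{k+1}_\beta,X\setminus F)\geq b_0\delta^{k+1}$ drawn from generation $k+1$ is too weak to exceed $\tfrac12 b_0\delta^k$, and the decisive gain is that \eqref{eq:far_from_bdr} supplies a separation at the coarser scale $b_0\delta^k$, which overcomes the threshold imposed by the proximity implication.
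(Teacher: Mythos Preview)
Your proposal is correct and follows essentially the same construction as the paper: declare the forced parent when some $\alpha$ satisfies $d(x^{k+1}_\beta,x^k_\alpha)<\tfrac12 b_0\delta^k$, otherwise for $k\geq m$ use \eqref{eq:distribution2} to select a parent in the same $F$, and extend by transitivity. The paper disposes of the forced-case $F$-compatibility in a single clause (``$x^k_\alpha\in F$ by \eqref{eq:far_from_bdr}''), while you spell out the contrapositive argument via $\dist(x^k_\alpha,F)\geq b_0\delta^k$ when $x^k_\alpha\in X\setminus F$; this is exactly the intended reasoning.
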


\begin{proof}
We define a partial order as follows. 
Given $k\geq m$ and a point $x^{k+1}_\beta\in F$, check whether there exists $\alpha$ such that $d(x^k_\alpha,x^{k+1}_\beta)<b_0\delta^k/2$. If one exists, it is necessarily unique by \eqref{eq:distribution1}, and moreover, $x^{k}_\alpha\in F$ by \eqref{eq:far_from_bdr}. We then decree that $(k+1,\beta)\leq (k,\alpha)$. If no such good $\alpha$ exists, choose any $\alpha$ for which $x^k_\alpha\in F$ and $d(x^k_\alpha,x^{k+1}_\beta)<B_0\delta^k$, and decree that $(k+1,\beta)\leq (k,\alpha)$; at least one such $\alpha$ exists by \eqref{eq:distribution2}. In either case, we decree that $(k+1,\beta)$ is not a child of any other $(k,\gamma)$. Note that the additional above property is clear from this construction.

Given $k<m$ and a point $(k+1,\beta)$, we proceed in the same way as before except that we drop the requirement $x^k_\alpha\in F$. Finally, we extend $\leq$ by transitivity to obtain a partial ordering.
%It remains to check the claim concerning the number of children. We note that if $(k+1,\beta)\leq (k,\alpha)$, we have $d(x_\alpha^{k},x_\beta^{k+1})<B_0\delta^k$, but also $d(x_\beta^{k+1},x^{k+1}_{\gamma})\geq b_0\delta^{k+1}$ for any $\gamma\neq\beta$. Thus the geometric doubling property implies that there can be at most boundedly many, say $M$, such $(k+1,\beta)$. Conversely, for $(k,\alpha)$, there exists at least one $(k+1,\beta)$ with $d(x_\alpha^{k},x_\beta^{k+1})<B_0\delta^{k+1}\leq b_0\delta^k/2$, and thus $(k+1,\beta)\leq(k,\alpha)$, so there is at least one child.
\end{proof}

With the dyadic points and the partial order at hand, Theorem~\ref{thm:existence} guarantees the existence of a system of dyadic cubes $Q^k_\alpha$ with parameters $\delta$, $c_1=\tfrac13 b_0$ and $C_1=2B_0$. The proof of Proposition~\ref{prop:existsDyadSystem} is now completed by the following lemma.

%\subsection{Open and closed dyadic cubes}
%Note that, under the assumption of Proposition~\ref{prop:existsDyadSystem}, the parameters $c_0=b_0/2$ and $C_0=B_0$ satisfy $0<c_0\leq C_0<\infty$ and $12C_0\delta=12B_0\delta\leq b_0/2=c_0$. Having this, dyadic points $x^k_\alpha$ as in Lemma~\ref{lem:dyadic_points} and a partial order as in Lemma~\ref{lem:partial_order}, the construction of the open $\tilde{Q}^k_\alpha$, half-open $Q^k_\alpha$ and closed $\bar{Q}^k_\alpha$ dyadic cubes proceeds as in the proof of \cite[Theorem 2.2]{oma}. The open $\tilde{Q}^k_\alpha$ and closed $\bar{Q}^k_\alpha$ dyadic cubes are defined by
%\[\bar{Q}^k_\alpha:=\overline{\{x^\ell_\beta\colon (\ell,\beta)\leq (k,\alpha)\}} \quad \text{ and } \quad
%\tilde{Q}^k_\alpha:= X\setminus \Big(\bigcup_{\beta\neq \alpha}\bar{Q}^k_\beta\Big).\]

\begin{lemma}
If $\alpha_0$ is the unique index with $x^m_{\alpha_0}\in E$, we have
$\tilde{Q}^m_{\alpha_0} \subseteq E\subseteq \bar{Q}^m_{\alpha_0}$.
\end{lemma}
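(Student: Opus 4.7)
The plan is to exploit the additional property of the partial order from Lemma~\ref{lem:partial_order}, namely that for $\ell\geq m$ the relation $(\ell,\beta)\leq(m,\gamma)$ forces $x^\ell_\beta$ and $x^m_\gamma$ to lie in the same set $F\in\{E,X\setminus E\}$. Combined with the fact (built into the definition of a dyadic partial order) that each $(\ell,\beta)$ with $\ell\geq m$ has a \emph{unique} ancestor $(m,\gamma)$, this yields the key dichotomy: for $\ell\geq m$,
\begin{equation*}
  (\ell,\beta)\leq (m,\alpha_0)\iff x^\ell_\beta\in E,
\end{equation*}
since $x^m_{\alpha_0}$ is the only dyadic point of generation $m$ that lies in $E$. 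This is the combinatorial hinge of the whole argument.

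For the inclusion $E\subseteq\bar Q^m_{\alpha_0}$, I would take $x\in E$ and use the density estimate~\eqref{eq:distribution2} with $F=E$ to produce, for every $\ell\geq m$, a point $x^\ell_{\beta_\ell}\in E$ with $d(x,x^\ell_{\beta_\ell})<B_0\delta^\ell$. By the dichotomy above, every such $(\ell,\beta_\ell)$ satisfies $(\ell,\beta_\ell)\leq(m,\alpha_0)$, and letting $\ell\to\infty$ gives $x\in\overline{\{x^\ell_\beta:(\ell,\beta)\leq(m,\alpha_0)\}}=\bar Q^m_{\alpha_0}$ by \eqref{eq:closedCube}.

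For the inclusion $\tilde Q^m_{\alpha_0}\subseteq E$, I would argue by contrapositive: assume $x\notin E$, so $x\in X\setminus E$, and apply \eqref{eq:distribution2} with $F=X\setminus E$ to obtain approximants $x^\ell_{\beta_\ell}\in X\setminus E$ with $d(x,x^\ell_{\beta_\ell})<B_0\delta^\ell$. Let $\gamma_\ell$ be the unique index with $(\ell,\beta_\ell)\leq(m,\gamma_\ell)$; the dichotomy forces $x^m_{\gamma_\ell}\in X\setminus E$, hence $\gamma_\ell\neq\alpha_0$. Then $x^\ell_{\beta_\ell}\in\bar Q^m_{\gamma_\ell}\subseteq\bigcup_{\gamma\neq\alpha_0}\bar Q^m_\gamma$ by \eqref{eq:closedCube}, and as $\ell\to\infty$ we get $x\in\overline{\bigcup_{\gamma\neq\alpha_0}\bar Q^m_\gamma}$. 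By \eqref{eq:openCube}, $\tilde Q^m_{\alpha_0}$ is the complement of this union (which is actually closed, since its complement $\tilde Q^m_{\alpha_0}$ is open); thus $x\notin\tilde Q^m_{\alpha_0}$, as required.

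The only non-routine point is the bookkeeping around the $\gamma_\ell$: they may depend on $\ell$, so I should either note that each $x^\ell_{\beta_\ell}$ individually lies in some $\bar Q^m_{\gamma_\ell}$ and invoke closedness of the full union $\bigcup_{\gamma\neq\alpha_0}\bar Q^m_\gamma$ (guaranteed by \eqref{eq:openCube}), or pass to a subsequence with a fixed $\gamma$ using local finiteness that follows from \eqref{eq:distribution1} at level $m$. Either route closes the argument with no further obstacles.
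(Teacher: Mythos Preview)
Your proposal is correct and follows essentially the same path as the paper's proof: the key dichotomy you isolate is exactly what the paper uses (implicitly for the first inclusion, explicitly via Lemma~\ref{lem:partial_order} for the second). For the final step, the paper takes your second route---it bounds $d(x^m_\alpha,x)<3B_0\delta^m$ and invokes geometric doubling to get only finitely many relevant $\alpha$, then passes to a subsequence landing in a single $\bar Q^m_{\alpha_1}$. Your first route, observing directly from \eqref{eq:openCube} that $\bigcup_{\gamma\neq\alpha_0}\bar Q^m_\gamma$ is closed (its complement being the open set $\tilde Q^m_{\alpha_0}$), is a slight streamlining that avoids the subsequence entirely.
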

\begin{proof}
Suppose $x\in E$. Then, by \eqref{eq:distribution2}, for every $k\geq m$ there exists $x^k_\beta\in E$ such that $d(x^k_\beta ,x)<B_0\delta^k\to 0$ as $k\to\infty$. This shows that
\[x\in \overline{\{x^k_\beta\colon (k,\beta)\leq (m,\alpha_0)\}}=\bar{Q}^m_{\alpha_0}.\]
To show that $\tilde{Q}^m_{\alpha_0} \subseteq E$, it suffices to show that 
\[X\setminus E\subseteq \left(\tilde{Q}^m_{\alpha_0}\right)^c=\bigcup_{\alpha\neq\alpha_0}\bar{Q}^m_\alpha .\]
To this end, suppose $x\in X\setminus E$. Then, by \eqref{eq:distribution2}, for every $k\geq m$ there exists $x^k_\beta\in X\setminus E$ such that $d(x^k_\beta ,x)<B_0\delta^k\to 0$ as $k\to\infty$. By Lemma~\ref{lem:partial_order}, for each such $k$ we have that $(k,\beta)\leq (m,\alpha)$ with some $x^m_\alpha\in X\setminus E$. This implies that $x^k_\beta\in \bar{Q}^m_\alpha$ for some $\alpha\neq\alpha_0$, and hence $d(x^k_\beta,x^m_\alpha)<2B_0\delta^m$ by \eqref{eq:contain}; see also Remark~\ref{rem:inclusion}. For all such $\alpha$, we have $d(x^m_\alpha,x)\leq d(x^m_\alpha,x^k_\beta)+d(x^k_\beta,x)<2B_0\delta^m+B_0\delta^k\leq 3B_0\delta^m$, and hence geometric doubling implies that there are only boundedly many relevant $x^m_\alpha$ here. Passing to a subsequence over $k$, we may assume that all $x^k_\beta$ belong to the same $\bar{Q}^m_{\alpha_1}$ with $\alpha_1\neq\alpha_0$. Thus also
\begin{equation*}
  x=\lim_{k\to\infty} x^k_\beta\in\bar{Q}^m_{\alpha_1}\subseteq\bigcup_{\alpha\neq\alpha_0}\bar{Q}^m_{\alpha_0}.\qedhere
\end{equation*}
\end{proof}

This completes the proof of Proposition~\ref{prop:existsDyadSystem}, and thereby also the proof of Theorem~\ref{thm:main}.

\bibliographystyle{plain}%{amsalpha}%{ams-pln}%

\def\cprime{$'$}

\end{document}